\newcommand{\Fin}{\mathbf{Fin}}
\newcommand{\OS}{\mathbf{OS}}
\newcommand{\op}{\mathrm{op}}
\newcommand{\bC}{\mathbf{C}}
\newcommand{\cV}{\mathcal{V}}
\newcommand{\cX}{\mathcal{X}}
\newcommand{\cQ}{\mathcal{Q}}
\renewcommand{\phi}{\varphi}
\newcommand{\maxcl}{\operatorname{mcl}}
\begin{document}

\title{Markov random fields and iterated toric fibre products}
\author{Jan Draisma and Florian M.~Oosterhof}

\address[Jan Draisma]{Mathematical Institute, University of Bern,
Sidlerstrasse 5, 3012 Bern, Switzerland; and Eindhoven
University of Technology, The Netherlands}
\email{jan.draisma@math.unibe.ch}

\address[Florian M.~Oosterhof]{Department of Mathematics and
Computer Science, Eindhoven University of Technology,
P.O.~Box 513, 5600 MB Eindhoven, The Netherlands}
\email{f.m.oosterhof@tue.nl}

\maketitle

\vspace{-.5cm}

\begin{abstract}
We prove that iterated toric fibre products from a finite collection of
toric varieties are defined by binomials of uniformly bounded degree.
This implies that Markov random fields built up from a finite collection
of finite graphs have uniformly bounded Markov degree.
\end{abstract}

\section{Introduction and main results} \label{sec:Intro}

The notion of {\em toric fibre product} of two toric varieties goes back
to \cite{Sullivant07}. It is of relevance in algebraic statistics since
it captures algebraically the Markov random field on a graph obtained
by glueing two graphs along a common subgraph; see \cite{Rauh14} and
also below. In \cite{Sullivant07,Rauh14,Rauh14b} it is proved that under
certain conditions, one can explicitly construct a Markov basis for the
large Markov random field from bases for the components.  For
related results see \cite{Shibuta12,Engstrom14,Kahle14}.

However, these conditions are not always satisfied.  
Nevertheless, in
\cite[Conjecture 56]{Rauh14} the hope was raised that when building
larger graphs by glueing copies from a finite collection of graphs along a common subgraph,
there might be a uniform upper bound on the Markov degree of the models
thus constructed, independent of how many copies of each graph are
used. A special case of this conjecture was proved in the same paper
\cite[Theorem 54]{Rauh14}. We prove the conjecture in general,
and along the way we link it to recent work \cite{Sam14} in {\em
representation stability}. Indeed, an important point we would like
to make, apart from proving said conjecture, is that algebraic statistics
is a natural source of problems in {\em asymptotic algebra}, to which ideas 
from representation stability apply. Our main theorems are reminiscent of 
Sam's recent stabilisation theorems on equations and higher syzygies for 
secant varieties of Veronese embeddings \cite{Sam15b,Sam16}. 

\subsection*{Markov random fields}
Let $G=(N,E)$ be a finite, undirected, simple graph and for
each node $j \in N$ let $X_j$ be a random variable taking
values in the finite set $[d_j]:=\{1,\ldots,d_j\}$. A joint
probability distribution on $(X_j)_{j\in N}$ is said to
satisfy the {\em local Markov properties} imposed by the
graph if for any two non-neighbours $j,k \in N$ the variables $X_j$
and $X_k$ are conditionally independent given $\{X_l \mid
\{j,l\} \in E\}$.

On the other hand, a joint probability distribution $f$ on the $X_j$ is said to {\em factorise according to $G$} if for each maximal clique $C$ of $G$ and configuration $\alpha \in \prod_{j \in C}[d_j]$ of the random variables labelled by $C$ there exists an interaction parameter $\theta^C_\alpha$ such that for each configuration $\beta \in \prod_{j \in N}[d_j]$ of all random variables of $G$:
\[
f(\beta) = \prod_{C\in\maxcl(G)}\theta^C_{\beta|_C}
\]
where $\maxcl(G)$ is the set of maximal cliques of $G$, and $\beta|_C$ is the restriction of $\beta$ to $C$.

These two notions are connected by the Hammersley-Clifford theorem, which says that a positive joint probability distribution on $G$ factorises according to $G$ if and only if it satisfies the Markov properties; see \cite{Hammersley71} or \cite[Theorem 3.9]{Lauritzen98}.

The set of all positive joint probability distributions on $G$ that satisfy the Markov properties is therefore a subset of the image of the following map:
\[
\phi_G: \CC^{\prod_{C\in\maxcl(G)}\prod_{j\in
C}[d_j]}\to\CC^{\prod_{j\in N}[d_j]},\qquad (\theta^C_\alpha)_{C,\alpha}\mapsto \left(\prod_{C\in\maxcl(G)}\theta^C_{\beta|_C}\right)_{\beta}
\]
It is the ideal $I_G$ of polynomials vanishing on $\im \phi_G$ that is of 
interest in algebraic statistics. Since the components of
$\phi_G$ are monomials, $I_G$ is generated by finitely many
binomials (differences of two monomials) in the standard
coordinates on $\CC^{\prod_{j \in N} [d_j]}$, and any
finite generating set of binomials can be used to set up a
Markov chain for testing whether given observations of the
variables $X_j$ are compatible with the assumption that
their joint distribution factorises according to the graph
$G$ \cite{Diaconis98}. The zero locus of $I_G$ is often
called the {\em graphical model} of $G$.

Now suppose we have graphs $G_1,\ldots,G_s$ with node sets $N_1,\ldots,N_s$, that $N_i \cap N_k$ equals a fixed set $N_0$ for all $i \neq k$ in $[s]$, and that the graph induced on $N_0$ by each $G_i$ is equal to a fixed graph $H$. Moreover, for each $j \in \bigcup_i N_i$ fix a number $d_j$ of states.  
We can then glue copies of the $G_i$ along their common subgraph $H$, by which we mean first taking disjoint copies of the $G_i$ and then identifying the nodes labelled by a fixed $j \in N_0$ across all copies. 
For $a_1,\ldots,a_s\in\ZZ_{\geq 0}$, we denote the graph obtained by glueing $a_i$ copies of graph $G_i, i \in [s]$ by $\sum_H^{a_1}G_1+_H\cdots+_H\sum_H^{a_s}G_s$:
\begin{center}
\includegraphics[width=\textwidth]{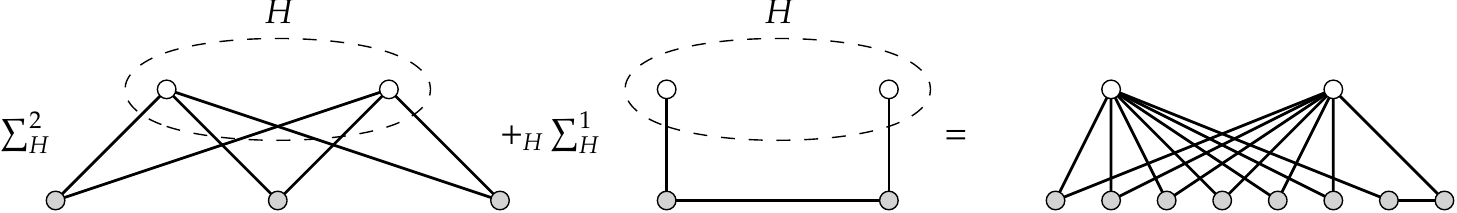}
\end{center}
\begin{thm} \label{cor:Markov}
	Let $G_1,\ldots,G_s$ be graphs with a common subgraph $H$ and a number of states associated to each node. Then there exists a uniform bound $D\in\ZZz$ such that for all multiplicities $a_1,\ldots,a_s$, the ideal $I_G$ of $G = \sum_H^{a_1}G_1+_H\cdots+_H\sum_H^{a_s}G_s$ is generated by binomials of degree at most $D$.
\end{thm}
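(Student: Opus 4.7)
The plan is to recognise Theorem \ref{cor:Markov} as a statement about iterated toric fibre products and then prove a uniform bound using equivariant Noetherianity in the spirit of representation stability. The first step is to note that $\im \phi_G$ is a toric variety $X_G$ and that glueing two graphs along a common subgraph $H$ corresponds exactly to taking the toric fibre product of the two toric varieties over $X_H$, essentially because the interaction parameters $\theta^C_\alpha$ attached to maximal cliques $C \subseteq H$ must agree in the two factors. Hence $X_G$ for $G = \sum_H^{a_1} G_1 +_H \cdots +_H \sum_H^{a_s} G_s$ is an $(a_1 + \cdots + a_s)$-fold iterated toric fibre product over the fixed base $X_H$, with factors drawn from the finite set $\{X_{G_1}, \ldots, X_{G_s}\}$.

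Next I would introduce the symmetry that underlies representation stability. The product $\prod_i S_{a_i}$ of symmetric groups acts on $G$ by permuting identical copies of each $G_i$ while fixing $H$ pointwise, and this action stabilises the ideal $I_G$. Moreover, natural insertion maps (inserting an extra copy of some $G_i$) relate ideals for different multiplicities. The key is to package these into a functor from a suitable combinatorial category $\bC$, something like $\Fin^s$ or a power of $\OS$, to the category of toric binomial ideals, and then prove an equivariant Noetherianity theorem: every such $\bC$-ideal is generated by the $\bC$-orbits of finitely many binomials, in particular by binomials of bounded total degree. From this the uniform bound $D$ in the theorem follows immediately, since each $I_G$ is the value of this functor at $(a_1, \ldots, a_s)$.

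The main obstacle is the Noetherianity step. The work of Sam and Snowden, including \cite{Sam14}, provides a template for $\OS$-modules, but to apply it here one must set up the equivariant framework so that toric ideals of iterated fibre products become modules over an appropriate Noetherian category, and so that the generation statement translates into a uniform bound on the \emph{degrees} of binomial generators rather than merely on the number of $\bC$-orbits. Handling the $s$-fold product structure simultaneously, and ensuring that the insertion maps interact well with both the binomial structure and the fibre-product compatibility relations, is where the real technical difficulty lies.
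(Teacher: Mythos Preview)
Your high-level plan matches the paper's strategy: reduce the graph-glueing statement to one about iterated toric fibre products (this is Proposition~\ref{prop:GlueToric}), package these as a $\Fin^s$-variety with coordinate ring a $(\Fin^s)^\op$-algebra, and deduce the uniform degree bound from a Noetherianity statement, with $\OS^s$ playing the role of the Gr\"obner-friendly auxiliary category. So the architecture is right.

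The gap is in the Noetherianity step, which you correctly flag as the obstacle but misdiagnose. The Sam--Snowden template you cite concerns $\Fin^\op$-\emph{modules}; here one needs Noetherianity of a $(\Fin^s)^\op$-\emph{algebra}, and the obvious ambient algebra---the full polynomial ring $A_n:S\mapsto K[x_\alpha\mid\alpha\in[n]^S]$---is \emph{not} Noetherian as a $\Fin^\op$-algebra for $n\geq 2$ (the paper exhibits an explicit infinite antichain of monomials). So simply declaring the $I_G$ to form a $\bC$-ideal in the coordinate ring of the ambient tensor space, or a module over a Noetherian category, does not suffice. The paper's decisive extra idea is that every iterated toric fibre product of Hadamard-stable varieties lands inside the closed $\Fin^s$-subvariety $\cQ^{\leq 1}$ of tuples of rank-one tensors (Lemma~\ref{lm:FinsSub}), and that the coordinate ring of $\cQ^{\leq 1}$ \emph{is} a Noetherian $(\Fin^s)^\op$-algebra (Theorems~\ref{thm:RankOne} and~\ref{thm:Tuples}). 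Establishing this requires a new well-quasi-order result on the monoid $M_n$ of nonnegative integer matrices with constant column sums (Proposition~\ref{prop:WQO}), combining Dickson's lemma with Maclagan's theorem on antichains of monomial ideals; this goes genuinely beyond the $\OS$-module framework. Without the rank-one-tensor embedding your functor takes values in a non-Noetherian object and the argument cannot close.
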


Our proof shows that one needs only finitely many combinatorial
types of binomials, independent of $a_1,\ldots,a_s$, to generate
$I_G$. This result is similar in flavour to the Independent Set
Theorem from~\cite{Hillar09}, where the graph is fixed but the
$d_j$ vary. Interestingly, the underlying categories responsible
for these two stabilisation phenomena are opposite to each other; see
Remark~\ref{re:Independent}.

\begin{ex}
In \cite{Rauh14b} it is proved that the ideal $I_G$ for the complete
bipartite graph $G=K_{3,N}$, with two states for each of the random
variables, is generated in degree at most 12 for all $N$. The graph 
$G$ is obtained by glueing $N$ copies of $K_{3,1}$ along their common 
subgraph consisting of $3$ nodes without any edges. 
\end{ex}

We derive Theorem~\ref{cor:Markov} from a general stabilisation 
result on {\em toric fibre products}, which we introduce next.

\subsection*{Toric fibre products}

Fix a ground field $K$, let $r$ be a natural number, and let
$U_1,\ldots,U_r$, $V_1,\ldots,V_r$ be finite-dimensional vector spaces
over $K$. 

Define a bilinear operation
\begin{equation} \label{eq:Toric}
\prod_j U_j \times \prod_j V_j \to \prod_j (U_j \otimes
V_j),\quad (u,v)\mapsto u*v:=(u_1 \otimes v_1,\ldots,u_r \otimes
v_r). 
\end{equation}

\begin{de}
The {\em toric fibre product} $X*Y$ of Zariski-closed subsets $X \subseteq
\prod_j U_j$ and $Y \subseteq \prod_j V_j$ equals the Zariski-closure
of the set $\{u*v \mid u \in X, v \in Y\}$. \hfill $\clubsuit$
\end{de}

\begin{re} \label{re:Ideal}
In~\cite{Sullivant07}, the toric fibre product is defined at the level of
ideals: if $(x^j_i)_i$ are coordinate functions on $U_j$ and $(y^j_k)_k$
are coordinate functions on $V_j$, then $(z^j_{i,k}:=x^j_i \otimes
y^j_k)_{i,k}$ are coordinate functions on $U_j \otimes V_j$. The ring
homomorphism of coordinate rings
\[ K[(z^j_{i,k})_{j,i,k}]=
K\left[\prod_j (U_j \otimes V_j)\right] \to 
K\left[\prod_j U_j \times \prod_j V_j \right]
= K[(x^j_i)_{j,i}, (y^j_k)_{j,k}] 
\]
dual to~\eqref{eq:Toric} sends $z^j_{i,k}$ to $x^j_i \cdot y^j_k$. If
we compose this homomorphism with the projection modulo the ideal
of $X \times Y$, then the kernel of the composition is precisely
the toric fibre product of the ideals of $X$ and $Y$ as introduced
in~\cite{Sullivant07}. In that paper, multigradings play a crucial role
for {\em computing} toric fibre products of ideals, but do
not affect the {\em definition} of toric fibre products.
\hfill
$\clubsuit$
\end{re}

The product $*$ is associative and commutative up to reordering tensor factors. We can iterate this construction and form products like $X^{*2} * Y * Z^{*3}$, where $Z$ also lives in a product of $r$ vector spaces $W_j$. This variety lives in $\prod_j (U_j^{\otimes 2}) \otimes V_j
\otimes (W_j^{\otimes 3})$.

We will not be taking toric fibre products of general varieties,
but rather {\em Hadamard-stable} ones. For this, we have to choose
coordinates on each $U_j$, so that $U_j=K^{d_j}$.

\begin{de}
On $K^d$ the Hadamard product is defined as $(a_1,\ldots,a_d) \bigcirc
(b_1,\ldots,b_d)=(a_1 b_1,\ldots,a_d b_d)$. On $U:=U_1 \times \cdots
\times U_r$, where $U_j=K^{d_j}$, it is defined component-wise. A set $X
\subseteq \prod_j U_j$ is called {\em Hadamard-stable} if $X$ contains
the all-one vector $\one_U$ (the unit element of $\bigcirc$) and if
moreover $x \bigcirc z \in X$ for all $x,z \in X$. \hfill $\clubsuit$
\end{de}

\begin{re}
By~\cite[Remark after Proposition 2.3]{Eisenbud96}, $X$ is Hadamard-stable
if and only if its ideal is generated by differences of two monomials.
\end{re}

In particular, the Zariski-closure in $U$ of a subtorus of
the $\sum_j d_j$-dimensional torus $\prod_j (K \setminus
\{0\})^{d_j}$ is Hadamard-stable. These are the toric
varieties from the abstract. 

Suppose that we also fix identifications $V_j=K^{d_j'}$ and a corresponding
Hadamard multiplication $\prod_j V_j \times \prod_j V_j \to \prod_j
V_j$. Equipping the spaces $U_j \otimes V_j$ with the natural coordinates
and corresponding Hadamard multiplication, the two operations just
defined satisfy $(u \bigcirc u')*(v \bigcirc
v')=(u*v)\bigcirc(u'*v')$ as well as $\one_{U_j \otimes
V_j}=\one_{U_j} \otimes \one_{V_j}$. Consequently, if both
$X \subseteq \prod_j U_j$ and $Y \subseteq \prod_j V_j$ are
Zariski-closed and Hadamard-stable, then so is their toric
fibre product $X*Y$. We can now formulate our second main result. 

\begin{thm} \label{thm:Main}
Let $s,r \in \ZZ_{\geq 0}.$ For each $i \in [s]$ and $j
\in [r]$ let $d_{ij} \in \ZZ_{\geq 0}$ and set $V_{ij}:=K^{d_{ij}}$. For
each $i \in [s]$, let $X_i \subseteq \prod_j V_{ij}$ be a Hadamard-stable
Zariski-closed subset. Then there exists a uniform bound $D
\in \ZZ_{\geq 0}$ such that for any exponents
$a_1,\ldots,a_s$ the ideal of $X_1^{*a_1} * \cdots *
X_s^{*a_s} \subseteq \prod_{j=1}^r \bigotimes_{i=1}^s
V_{ij}^{\otimes a_i}$ is generated by polynomials of degree
at most $D$.
\end{thm}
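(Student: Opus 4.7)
The plan is an equivariant-Noetherianity argument in the spirit of representation stability. Rather than bounding generators of each ideal individually, I would encode the whole family of ideals as a single object over a suitable combinatorial category and appeal to a finite-generation theorem on that category. For $\mathbf{a}=(a_1,\ldots,a_s)\in\ZZ_{\geq 0}^s$, write $W_{\mathbf{a}}:=\prod_{j=1}^r\bigotimes_{i=1}^s V_{ij}^{\otimes a_i}$ and $X_{\mathbf{a}}:=X_1^{*a_1}*\cdots*X_s^{*a_s}\subseteq W_{\mathbf{a}}$. Two structural features are essential. First, $S_{a_1}\times\cdots\times S_{a_s}$ acts on $W_{\mathbf{a}}$ by permuting tensor factors and preserves $X_{\mathbf{a}}$. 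Second, the Hadamard stability of each $X_i$ (which ensures $\one\in X_i$) provides, for every component-wise monotone injection $\iota:\mathbf{a}\hookrightarrow\mathbf{b}$, a closed embedding $W_{\mathbf{a}}\hookrightarrow W_{\mathbf{b}}$ that pads the missing tensor slots with the all-one vectors and carries $X_{\mathbf{a}}$ into $X_{\mathbf{b}}$. The $X_{\mathbf{a}}$ are thus functorial on the category $\OS$ of monotone injections between tuples; dually, $R_{\mathbf{a}}:=K[W_{\mathbf{a}}]$ forms a contravariant graded $\OS$-functor $\mathbf{R}$, and the vanishing ideals $I_{\mathbf{a}}$ assemble into a graded sub-functor $\mathbf{I}\subseteq\mathbf{R}$.

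The task then reduces to showing that $\mathbf{I}$ is finitely generated in the $\OS$-equivariant sense, i.e.\ that the transports, along all $\OS$-morphisms, of finitely many (necessarily binomial, by Hadamard stability together with the Eisenbud--Sturmfels remark recalled after the definition) elements of some fixed $I_{\mathbf{a}_0}$ generate every $I_{\mathbf{a}}$. The maximum degree of those finitely many binomials is then the desired uniform bound $D$. Finite generation would follow from a Noetherianity statement for graded sub-functors of $\mathbf{R}$ on $\OS$, in the spirit of the $\mathrm{FI}$- and $\mathrm{OI}$-module results cited in~\cite{Sam14}. The input one needs is that $\mathbf{R}$ itself is ``$\OS$-finitely generated in each degree'': the coordinates of $R_{\mathbf{a}}$ are indexed by tuples consisting of $r$ maps $[a_i]\to[d_{ij}]$, whose $\OS$-orbits fall into finitely many equivalence classes in each fixed multi-degree, so $\mathbf{R}$ is a suitable ambient object for a Noetherianity theorem.

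The main obstacle is casting the Noetherianity result in the correct shape for this particular polynomial ring. A variable in $R_{\mathbf{a}}$ is indexed not by a bare injection out of a standard $\OS$-object, but by a tuple of decorated set maps into the fixed finite alphabets $[d_{ij}]$, so $\mathbf{R}$ is not literally a polynomial ring on a basic $\OS$-generator. One must either realise $\mathbf{R}$ as a quotient of such a basic equivariant polynomial ring (reducing Noetherianity to a known theorem) or run a tailored well-partial-order argument in the vein of Hillar--Sullivant~\cite{Hillar09}. Once this framework is in place, the passage from ``finitely generated $\OS$-module of binomials'' to ``$I_{\mathbf{a}}$ generated by binomials of degree at most $D$'' is essentially bookkeeping: the finitely many generators live in $R_{\mathbf{a}_0}$ and have total degree bounded by a constant depending only on the data $(X_i)_{i\in[s]}$.
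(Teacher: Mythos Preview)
Your overall strategy---package the family $(I_{\mathbf a})_{\mathbf a}$ as a single equivariant ideal and invoke a Noetherianity theorem---matches the paper's, but two concrete choices in your setup are off, and together they leave a genuine gap.

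\textbf{The functoriality goes the wrong way.} You build your category from \emph{injections} $\mathbf a\hookrightarrow\mathbf b$ (padding with the all-one vector), so that the ring maps go $R_{\mathbf b}\to R_{\mathbf a}$. In a contravariant algebra of this shape, ``the ideal generated by finitely many elements of $I_{\mathbf a_0}$'' can only produce elements of $I_{\mathbf a}$ for $\mathbf a\leq\mathbf a_0$; there is simply no morphism that transports a relation from a small tuple to a larger one. The paper instead works over $\Fin^s$ (and, for the Gr\"obner argument, its ordered-surjective subcategory $\OS^s$): a map $\pi:S\to T$ induces $R_T\to R_S$ by $x_\alpha\mapsto x_{\alpha\pi}$, so generators living at small $T$ propagate to arbitrarily large $S$. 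The role of Hadamard stability here is not the ``contains $\one$'' part you used, but the closure under Hadamard \emph{multiplication}, which is exactly what makes $\cX$ a closed $\Fin^s$-subvariety under these (non-injective) maps.

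\textbf{The ambient polynomial ring is not Noetherian.} You aim for a Noetherianity statement for sub-functors of the full $\mathbf R$. The paper shows by an explicit example that already for a single factor the polynomial $\Fin^\op$-algebra $A_n$ is \emph{not} Noetherian when $n\geq 2$ (the monomials $u_k=x_{21\cdots1}x_{121\cdots1}\cdots$ generate a non-finitely-generated monomial ideal). So no amount of ``$\mathbf R$ is finitely generated in each degree'' will rescue the argument at the algebra level. The decisive reduction you are missing is that every $X_{\mathbf a}$ sits inside the variety $\cQ^{\leq 1}$ of $r$-tuples of rank-one tensors, and one must pass to the \emph{quotient} coordinate ring $A_n/I_n$ (or its $r$-fold and $s$-fold analogues). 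It is this quotient that is Noetherian, and proving it is the real work: one identifies $A_n/I_n$ with the monoid algebra $KM_n$ of nonnegative integer matrices with constant column sum, and then establishes a delicate well-quasi-order on $\bigsqcup_S M_n(S)$ whose proof needs Maclagan's theorem that monomial ideals are well-quasi-ordered by reverse inclusion. Your ``tailored well-partial-order argument in the vein of Hillar--Sullivant'' is pointing at the right kind of tool, but the specific WQO required here is new and does not follow from Higman-type lemmas alone.
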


\begin{re}
A straightforward generalisation of this theorem also holds, where
each $X_i$ is a closed sub-scheme given by some ideal $J_i$ in the
coordinate ring of $\prod_j V_{ij}$. Hadamard-stable then says that
that the pull-back of $J_i$ under the Hadamard product lies in
the ideal of $X_i \times X_i$, and the toric fibre product is as in
Remark~\ref{re:Ideal}. Since this generality would slightly obscure
our arguments, we have decided to present explicitly the version with
Zariski-closed subsets---see also Remark~\ref{re:Scheme}.

Also, the theorem remains valid if we remove the condition that the
$X_i$ contain the all-one vector, but require only that they be closed
under Hadamard-multiplication; see Remark~\ref{re:NoOne}.
\hfill $\clubsuit$
\end{re}

\subsection*{Organisation of this paper}
The remainder of this paper is organised as follows. In
Section~\ref{sec:FS} we introduce the categories of (affine)
$\Fin$-varieties and, dually, $\Fin^\op$-algebras. The point is that,
as we will see in Section~\ref{sec:Proof}, the iterated toric fibre
products together form such a $\Fin$-variety (or rather a
$\Fin^s$-variety,
where $\Fin^s$ is the product category of $s$ copies of $\Fin$).  

Indeed,
they sit naturally in a Cartesian product of copies of the
$\Fin$-variety
of rank-one tensors, which, as we prove in Section~\ref{sec:RankOne},
is Noetherian (Theorem~\ref{thm:RankOne}). This Noetherianity result
is of a similar flavour as the recent result from \cite{Sam14}
(see also \cite[Proposition 7.5]{Draisma11d} which follows the same
proof strategy) that any finitely generated $\Fin^\op$-{\em module}
is Noetherian; this result played a crucial role in a proof of
the {\em Artinian conjecture}. However, our Noetherianity result
concerns certain $\Fin^\op$-{\em algebras} rather than modules,
and is more complicated. Finally, in 
Section~\ref{sec:Proof} we first prove
Theorem~\ref{thm:Main} and then derive 
Theorem~\ref{cor:Markov} as a corollary.

\subsection*{Acknowledgements}

Both authors are (partially) supported by Draisma's {\em Vici} grant {\em
Stabilisation in Algebra and Geometry} from the Netherlands Organisation
for Scientific Research (NWO).  The current paper is partly based on the
second author's Master's thesis at Eindhoven University of Technology;
see \cite{Oosterhof16}. We thank Johannes Rauh and Seth Sullivant for
fruitful discussions at {\em Prague Stochastics}, August 2014.

\section{Affine $\Fin$-varieties and $\Fin^\op$-algebras} \label{sec:FS}

The category $\Fin$ has as objects all finite sets and as morphisms
all maps. Its opposite category is denoted $\Fin^\op$. When $\bC$
is another category whose objects are called {\em somethings}, then a
{\em $\Fin$-something} is a covariant functor from $\Fin$ to $\bC$
and a {\em $\Fin^\op$-something} is a contravariant functor from $\Fin$
to $\bC$. The $\bC$-homomorphism associated to a $\Fin$-morphism $\pi$
is denoted $\pi_*$ (the {\em push-forward of $\pi$}) in the covariant
case and $\pi^*$ (the {\em pull-back of $\pi$}) in the contravariant
case. 

More generally, we can replace $\Fin$ by any category $\mathbf{D}$.
The $\mathbf{D}$-somethings themselves form a category, in which
morphisms are natural transformations. In our paper,
$\mathbf{D}$ is always closely related to $\Fin$ or to $\Fin^s$, the $s$-fold product of $\Fin$.

Here are three instances of $\Fin$ or $\Fin^\op$-somethings crucial to our paper.

\begin{ex} \label{ex:Fin}
Fix $n \in \ZZ_{\geq 0}$. Then the functor $S \mapsto [n]^S$ is a
$\Fin^\op$-set, which to $\pi \in \Hom_{\Fin}(S,T)$ associates the map
$\pi^*:[n]^T \to [n]^S, \alpha \mapsto \alpha \pi$, the composition of $\alpha$ and $\pi$.

Building on this example, the functor $A_n:S \mapsto K[x_\alpha \mid
\alpha \in [n]^S]$, a polynomial ring in variables labelled by $[n]^S$,
is a $\Fin^\op$-$K$-algebra, which associates to $\pi$ the $K$-algebra 
homomorphism $\pi^*: A(T) \to A(S),\quad x_\alpha \mapsto x_{\alpha \pi}$.

Third, we define an affine $\Fin$-$K$-vector space $Q_n$ by $Q_n: S \mapsto
(K^n)^{\otimes S}$, the space of $n \times \cdots \times n$-tensors
with factors labelled by $S$, which sends $\pi \in \Hom_\Fin(S,T)$
to the linear morphism $\pi_*:(K^n)^{\otimes S} \to (K^n)^{\otimes T}$
determined by
\[\pi_*: \otimes_{i \in S} v_i \mapsto 
\otimes_{j \in T}
\left(\bigcirc_{i
\in \pi^{-1}(j)} v_i\right), \]
where $\bigcirc$ is the Hadamard product in $K^n$. We follow the natural
convention that an empty Hadamard product equals the all-one vector
$\one \in K^n$; in particular, this holds in the previous formula for
all $j \in T$ that are not in the image of $\pi$.

The ring $A_n$ and the space $Q_n$ are related as follows: $Q_n(S)$ has a
basis consisting of vectors $e_\alpha:= \otimes_{i \in S} e_{\alpha_i},\
\alpha \in [n]^S$, where $e_1,\ldots,e_n$ is the standard basis of $K^n$; 
$A_n(S)$ is the coordinate ring of $Q_n(S)$ generated by the dual basis
$(x_\alpha)_{\alpha \in [n]^S}$; and for $\pi  \in \Hom_\Fin(S,T)$ the pullback
$\pi^*:A_n(T) \to A_n(S)$ is the homomorphism of $K$-algebras dual to the
linear map $\pi_*:Q_n(S) \to Q_n(T)$. Indeed, this is verified by the following 
computation for $\alpha \in [n]^T$:
\[ x_\alpha (\pi_* \otimes_{i \in S} v_i)
= x_\alpha (\otimes_{j \in T} \left(\bigcirc_{i \in \pi^{-1}(j)}v_i \right))
= \prod_{j \in T} \left(\bigcirc_{i \in \pi^{-1}(j)}v_i \right)_{\alpha_j}
= \prod_{i \in S} (v_i)_{\alpha_{\pi(i)}} 
= \pi^*(x_\alpha) (\otimes_{i \in S} v_i). \]
This is used in Section~\ref{sec:RankOne}.
$\clubsuit$
\end{ex}

In general, by {\em algebra} we shall mean an associative, commutative
$K$-algebra with $1$, and homomorphisms are required to preserve $1$.
So a $\Fin^\op$-algebra $B$ assigns to each finite set $S$ an algebra
and to each map $\pi:S \to T$ an algebra homomorphism
$\pi^*:B(T)
\to B(S)$. An {\em ideal} in $B$ is a $\Fin^\op$-subset $I$ of $B$
(i.e., $I(S)$ is a subset of $B(S)$ for each finite set $S$
and $\pi^*$ maps $I(T)$ into $I(S)$) such that
each $I(S)$ is an ideal in $B(S)$; then $S \mapsto B(S)/I(S)$ is again
a $\Fin^\op$-algebra, the {\em quotient} $B/I$ of $B$ by
$I$.

Given a $\Fin^\op$-algebra $B$, finite sets $S_j$ for $j$ in some index
set $J$, and an element $b_j \in B(S_j)$ for each $j$, there is a unique
smallest ideal $I$ in $B$ such that each $I(S_j)$ contains $b_j$. This ideal is constructed as:
\[
I(S) = \left(\pi^*(b_j) \,\middle|\, j\in J, \pi\in\Hom(S,S_j)\right)
\]
This is the ideal {\em generated} by the $b_j$. A $\Fin^\op$-algebra is called {\em Noetherian} if each ideal $I$ in it is generated by finitely many
elements in various $I(S_j)$, i.e., $J$ can be taken finite.

\begin{ex}
The $\Fin^\op$-algebra $A_1$ is Noetherian. Indeed, $A_1(S)$ is the polynomial
ring $K[t]$ in a single variable $t$ for all $S$, and the homomorphism
$A_1(T) \to A_1(S)$ is the identity $K[t] \to K[t]$. So Noetherianity
follows from Noetherianity of the algebra $K[t]$.

For $n \geq 2$ the $\Fin^\op$-algebra $A_n$ is {\em not}
Noetherian. For instance, consider the monomials
\[ u_2:=x_{21}x_{12}  \in A_n([2]), \ 
u_3:=x_{211}x_{121}x_{112}  \in A_n([3]), \ 
u_4:=x_{2111}x_{1211}x_{1121}x_{1112} \in A_n([4]), 
\]
and so on. For any map $\pi:[k] \to [l]$ with $k>l$, by the pigeon hole principle
there are two indices $i,j \in [k]$ such that $\pi(i)=\pi(j)=:m \in [l]$.
Then $\pi^* x_{1 \cdots 1 2 1 \cdots 1}$, where the $2$ is in the $m$-th
position, is a variable with at least two indices equal to $2$. Since $u_k$ 
contains no such variable, 
$\pi^* u_l$ does not divide $u_k$. So $u_2,u_3,\ldots$ generates a
non-finitely generated monomial $\Fin^\op$-ideal in $A_n$.  (On the other
hand, for each $d$ the piece of $A_n$ of homogeneous polynomials of
degree at most $d$ {\em is} Noetherian as a $\Fin^\op$-module, see
\cite[Proposition 7.5]{Draisma11d}.) \hfill $\clubsuit$
\end{ex}

We shall see in the following section that certain interesting
quotients of each $A_n$ {\em are} Noetherian.

\section{Rank-one tensors form a Noetherian $\Fin$-variety} \label{sec:RankOne}

Let $Q^{\leq 1}_n(S)$ be the variety of {\em rank-one tensors}, i.e.,
those of the form $\otimes_{i \in S} v_i$ for vectors $v_i \in K^n$. We
claim that this defines a Zariski-closed $\Fin$-subvariety $Q^{\leq 1}_n$
of $Q_n$.

For this, we must verify that for a map $\pi:S \to T$ the
map $Q_n(S) \to Q_n(T)$ dual to the algebra homomorphism $A_n(T) \to
A_n(S)$ sends $Q_n^{\leq 1}(S)$ into $Q_n^{\leq 1}(T)$. And indeed,
in Example~\ref{ex:Fin} we have seen that this map sends
\[ \otimes_{i \in S} v_i \mapsto \otimes_{j \in T}
\left(\bigcirc_{i
\in \pi^{-1}(j)} v_i \right).\] 
It is well known that (if $K$ is infinite) the ideal in $A_n(S)$ of
$Q_n^{\leq 1}(S)$ equals the ideal $I_n(S)$ generated by all binomials
constructed as follows. Partition $S$ into two parts $S_1,S_2$, let
$\alpha_i,\beta_i \in [n]^{S_i}$ and write $\alpha_1||\alpha_2$ for
the element of $[n]^S$ which equals $\alpha_i$ on $S_i$. Then we have
the binomial
\begin{equation} \label{eq:Det}
x_{\alpha_1||\alpha_2}x_{\beta_1||\beta_2}-x_{\alpha_1||\beta_2}x_{\beta_1||\alpha_2}
\in I_n(S), 
\end{equation}
and $I_n(S)$ is the ideal generated by these for all partitions and all
$\alpha_1,\alpha_2,\beta_1,\beta_2$. The functor $S \mapsto I_n(S)$ is
an ideal in the $\Fin^\op$-algebra $A_n$; for infinite $K$ this follows
from the computation above, and for arbitrary $K$ it follows since the
binomials above are mapped to binomials by pull-backs of maps $T \to S$
in $\Fin$.  Moreover, $I_n$ is finitely generated (see also \cite[Lemma
7.4]{Draisma11d}):

\begin{lm}
The ideal $I_n$ in the $\Fin^\op$-algebra $A_n$ is finitely generated.
\end{lm}

\begin{proof}
In the determinantal equation~\eqref{eq:Det}, if there exist distinct $j,l
\in S_1$ such that $\alpha_1(j)=\alpha_2(l)$ and $\beta_1(j)=\beta_2(l)$,
then the equation comes from an equation in $I_n(S \setminus \{j\})$ via
the map $S \to S \setminus \{j\}$ that is the identity on $S \setminus
\{j\}$ and maps $j \to l$. By the pigeon hole principle this happens
when $|S_1|>n^2$.  Similarly for $|S_2|>n^2$. Hence $I_n$ is certainly
generated by $I_n([2n^2-1])$.
\end{proof}

The main result in this section is the following.

\begin{thm} \label{thm:RankOne}
For each $n \in \ZZ_{\geq 0}$ the coordinate ring $A_n/I_n$ of the $\Fin$-variety
$Q_n^{\leq 1}$ of rank-one tensors is a Noetherian $\Fin^\op$-algebra.
\end{thm}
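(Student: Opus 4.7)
The plan is to reduce the statement to a well-partial-ordering (WPO) question on finite subsets of $\ZZ_{\geq 0}^n$.

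First, I would unpack $(A_n/I_n)(S)$ explicitly. The coordinate ring of rank-one tensors in $(K^n)^{\otimes S}$ can be realised as a subalgebra of the polynomial ring $K[y^{(i)}_k \mid i \in S, k \in [n]]$ via $x_\alpha \mapsto \prod_{i \in S} y^{(i)}_{\alpha(i)}$. Under this embedding, a $K$-basis of monomials of $(A_n/I_n)(S)$ is in bijection with functions $f: S \to \ZZ_{\geq 0}^n$ of some constant weight $d := \sum_k f(s)_k$, via $f \mapsto \prod_{s,k} (y^{(s)}_k)^{f(s)_k}$. Under this bijection, multiplication corresponds to pointwise addition of $f$'s, and the pull-back by $\pi \in \Hom_\Fin(S,T)$ sends the monomial indexed by $f:T \to \ZZ_{\geq 0}^n$ to the one indexed by $f \circ \pi: S \to \ZZ_{\geq 0}^n$.

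Second, choose an equivariant monomial order on each $(A_n/I_n)(S)$ (one that is respected by all $\pi^*$ in the appropriate sense) and reduce the Noetherianity of $A_n/I_n$ to that of its $\Fin^\op$-invariant monomial ideals, by the standard argument that the $\Fin^\op$-invariant initial ideal of a $\Fin^\op$-invariant ideal is $\Fin^\op$-invariant, and finite generation of the former implies that of the latter.

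Third, describe $\Fin^\op$-invariant monomial ideals combinatorially. The $\Fin^\op$-ideal generated by the monomial indexed by $f_0: S_0 \to \ZZ_{\geq 0}^n$ consists of the monomials indexed by $g:T \to \ZZ_{\geq 0}^n$ for which some $\pi: T \to S_0$ satisfies $f_0(\pi(t)) \leq g(t)$ coordinatewise for all $t$, equivalently for which every value of $g$ coordinatewise dominates some value of $f_0$. Thus membership depends only on the finite value set $V(f_0) \subseteq \ZZ_{\geq 0}^n$, and $\Fin^\op$-invariant monomial ideals correspond to up-closed subsets of the poset
\[
\cV := \{V \subseteq \ZZ_{\geq 0}^n \mid V \text{ nonempty, finite, of constant weight}\},
\]
ordered by $V \preceq W$ iff every $w \in W$ coordinatewise dominates some $v \in V$.

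Finally, show that $(\cV,\preceq)$ is a WPO. Well-foundedness is easy: along a descending chain the common weights are weakly decreasing in $\ZZ_{\geq 0}$, hence eventually stabilise at some $d$; past that, the chain becomes a strictly ascending chain of subsets of the finite set of weight-$d$ vectors in $\ZZ_{\geq 0}^n$ and must terminate. For the absence of infinite antichains, observe that $V \preceq W$ holds iff the monomial ideal $I_V := (x^v \mid v \in V) \subseteq K[x_1,\ldots,x_n]$ contains $I_W$, so an antichain in $(\cV,\preceq)$ yields an antichain of monomial ideals under inclusion in $K[x_1,\ldots,x_n]$, which is finite by Maclagan's theorem. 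It follows that every up-closed subset of $\cV$ has finitely many minimal elements, every $\Fin^\op$-invariant monomial ideal in $A_n/I_n$ is finitely generated, and hence $A_n/I_n$ is a Noetherian $\Fin^\op$-algebra.

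The step I expect to be the main obstacle is producing a monomial order on each $(A_n/I_n)(S)$ that is simultaneously compatible with all $\pi^*$: pull-backs along non-injective $\pi$ can identify distinct monomials, so the order cannot be strictly preserved, and one is forced either to work with an order on $\Fin^\op$-orbits of monomials or to invoke equivariant Gr\"obner basis machinery in the style of Aschenbrenner--Hillar. Everything downstream of that step is combinatorial and, once Maclagan's theorem is granted, essentially routine.
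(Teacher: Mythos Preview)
Your overall architecture---Gr\"obner reduction to monomial ideals, then a well-quasi-order on monomials---is exactly the paper's. Your combinatorial Steps 3--4 are correct: for the $\Fin^\op$-division relation, membership in the $\Fin^\op$-ideal generated by a monomial depends only on its value set, and Maclagan's theorem alone gives the required WQO on value sets. That is a clean observation.

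The gap is precisely where you flag it, and it is genuine. There is no monomial order on the $(A_n/I_n)(S)$ compatible with all $\Fin$-pullbacks: a non-surjective $\pi:S\to T$ can send two distinct monomials $f_1,f_2\in M_n(T)$ to the same monomial $f_1\circ\pi=f_2\circ\pi$, so $\pi^*$ does not preserve leading terms and the initial-ideal functor $S\mapsto \mathrm{in}(J(S))$ need not be $\Fin^\op$-stable. Working with orbits of monomials does not repair this, since cancellation under $\pi^*$ can kill the leading term of $\pi^* f$ entirely. So Step~2 as written does not go through.

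The paper's resolution is to replace $\Fin$ by the auxiliary category $\OS$ of linearly ordered finite sets with surjective maps whose min-fibre function is increasing. On $\OS$, a column-lexicographic monomial order is strictly preserved by pullbacks (this is the content of Lemma~\ref{lm:Order}), so the Gr\"obner reduction works, and $\OS^\op$-Noetherianity implies $\Fin^\op$-Noetherianity. The price is that the relevant division relation is now the $\OS$-one, which is strictly finer than your $\Fin$-one: the witnessing $\pi$ must be surjective and satisfy the min-increasing condition, so divisibility no longer depends only on value sets. Consequently your Maclagan-only argument does not establish the WQO you actually need; the paper's Proposition~\ref{prop:WQO} requires a minimal-bad-sequence argument that intertwines Maclagan's theorem (on the monomial ideals $P(\alpha)$ generated by the columns) with Dickson's lemma (on the last columns $\beta^{(j)}$). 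In short: fixing your Step~2 forces a harder version of your Step~4.
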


Our proof follows the general technique from \cite{Sam14}, namely, to pass
to a suitable category close to $\Fin$ that allows for a Gr\"obner basis
argument. However, the relevant well-partial-orderedness proved below
is new and quite subtle.  We use the category $\OS$ from \cite{Sam14}
(also implicit in \cite[Section 7]{Draisma11d}) defined as follows.

\begin{de}
The objects of the category $\OS$ (``ordered-surjective'') are all finite
sets equipped with a linear order and the morphisms $\pi: S \to T$ are all
surjective maps with the additional property that the function $T \to S,\
j \mapsto \min \pi^{-1}(j)$ is strictly increasing. \hfill $\clubsuit$
\end{de} 

Any $\Fin$-algebra is also an $\OS$-algebra, and $\OS$-Noetherianity
implies $\Fin$-Noe\-the\-ria\-nity. So to prove Theorem~\ref{thm:RankOne}
we set out to prove the stronger statement that $A_n/I_n$ is, in fact,
$\OS$-Noetherian.

We get a more concrete grip on the $K$-algebra $A_n/I_n$ through the
following construction. Let $M_n$ denote the (Abelian) $\Fin^\op$-monoid
defined by
\[
M_n(S):=\left\{\alpha \in \ZZ_{\geq 0}^{[n] \times S} \mid \forall j,l \in S: \sum_{i=1}^n \alpha_{ij}=\sum_{i=1}^n \alpha_{il}
\right\},
\]
in which the multiplication is given by addition, and where the pull-back of a map $\pi:S \to T$ is the map $\pi^*: M_n(T) \to M_n(S)$ sending
$(\alpha_{ij})_{i \in [n],j \in T}$ to $(\alpha_{i\pi(j)})_{i \in
[n], j \in S}$. So elements of $M_n(S)$ are matrices with nonnegative
integral entries and constant column sum. Let $K M_n$ denote the
$\Fin$-algebra sending $S$ to the monoid $K$-algebra $K M_n(S)$. The
following proposition is a reformulation of a well-known fact.

\begin{prop} The $\Fin^\op$-algebra $A_n/I_n$ is isomorphic to the
$\Fin^\op$-algebra $K M_n$ (and the same is true when both are regarded as
$\OS^\op$-algebras). 
\end{prop}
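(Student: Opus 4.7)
The plan is to construct, for each finite set $S$, an explicit $K$-algebra homomorphism $\phi_S \colon A_n(S) \to KM_n(S)$ that is surjective with kernel exactly $I_n(S)$, and to verify that the collection $(\phi_S)_S$ is natural with respect to both the $\Fin^\op$- and the $\OS^\op$-structures. Define $\phi_S$ as the unique $K$-algebra homomorphism sending each generator $x_\alpha$ to $e_{m(\alpha)}$, where $m(\alpha) \in M_n(S)$ is the $0/1$-matrix with a single $1$ in row $\alpha(j)$ of each column $j \in S$, and $e_\gamma$ denotes the basis element of the monoid algebra $KM_n(S)$ associated to $\gamma$.

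For surjectivity, any $\gamma \in M_n(S)$ of constant column sum $d$ splits into $d$ column-sum-$1$ matrices by iteratively extracting one unit vector from each column, and each such piece is $m(\alpha)$ for some $\alpha \in [n]^S$; hence $e_\gamma = \phi_S\bigl(\prod_k x_{\alpha^{(k)}}\bigr)$. The inclusion $I_n(S) \subseteq \ker \phi_S$ follows from the columnwise identity
\[
m(\alpha_1 || \alpha_2) + m(\beta_1 || \beta_2) = m(\alpha_1 || \beta_2) + m(\beta_1 || \alpha_2),
\]
which holds because swapping the ``halves'' only redistributes the single $1$ supplied to each column of $S_2$ between the two factors. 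Naturality in $\pi \colon S \to T$ reduces to the computation $m(\alpha\pi)_{ij} = \delta_{\alpha(\pi(j)), i} = m(\alpha)_{i, \pi(j)}$, which is valid for every set map $\pi$ and so in particular for morphisms in $\Fin$ and in $\OS$.

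The heart of the proof is the reverse inclusion $\ker \phi_S \subseteq I_n(S)$. Since $\phi_S$ sends monomials to basis elements, its kernel is the $K$-span of binomials $\prod_k x_{\alpha^{(k)}} - \prod_k x_{\beta^{(k)}}$ whose two sides have equal image in $M_n(S)$, so the task is to rewrite any two such monomials into each other modulo $I_n(S)$. The cleanest route is to fix a monomial order on $A_n(S)$ refining the multi-grading by $M_n(S)$ and verify via Buchberger's $S$-polynomial criterion that the determinantal binomials~\eqref{eq:Det} form a Gr\"obner basis of $I_n(S)$; the resulting standard monomials are then in bijection with $M_n(S)$, forcing the induced map on quotients to be an isomorphism. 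A more elementary alternative is to use degree-$(1,1)$ swaps with $|S_2|=1$ to transpose the value of a single coordinate between two factors, rewriting one monomial into any other with the same image step-by-step; this combinatorial bookkeeping---equivalently, the classical result that $2\times 2$ minors generate the vanishing ideal of the Segre variety---is the main expected obstacle.
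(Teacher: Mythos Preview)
Your proposal is correct and follows exactly the same approach as the paper: define the explicit homomorphism $\phi_S$ sending $x_\alpha$ to the $0/1$-matrix $m(\alpha)$, check surjectivity and naturality, and identify the kernel with $I_n(S)$. The paper's proof is in fact much terser---it simply asserts surjectivity and that the kernel equals $I_n(S)$, treating this as the ``well-known fact'' announced before the proposition---whereas you spell out the surjectivity argument, the inclusion $I_n(S)\subseteq\ker\phi_S$, and sketch two standard routes (Gr\"obner basis for the $2\times2$ minors, or elementary transpositions) for the reverse inclusion.
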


\begin{proof}
For each finite set $S$, the $K$-algebra homomorphism $\Phi_S:A_n(S) \to
KM_n(S)$ that sends $x_\alpha, \alpha \in [n]^S$ to the $[n] \times S$
matrix in $M_n(S)$ with a $1$ at the positions $(\alpha_j,j),\ j \in S$
and zeroes elsewhere is surjective and has as kernel the ideal $I_n(S)$.
Moreover, if $\pi:S \to T$ is a morphism in $\Fin$, then we have $\Phi_T
\pi^*=\pi^* \Phi_S$, i.e., the $(\Phi_S)_S$ define a natural transformation.
\end{proof}

Choose any monomial order $>$ on $\ZZ_{\geq 0}^n$, i.e., a
well-order such that $a>b$ implies $a+c > b+c$ for every $a,b,c \in
\ZZ_{\geq 0}^n$. Then for each object $S$ in $\OS$ we define a linear
order $>$ on $M_n(S)$, as follows: $\alpha > \beta$ if $\alpha \neq
\beta$ and the smallest $j \in S$ with $\alpha_{.,j} \neq \beta_{.,j}$
(i.e., the $j$-th column of $\alpha$ is not equal to that of $\beta$)
satisfies $\alpha_{.,j}>\beta_{.,j}$ in the chosen monomial order on
$\ZZ_{\geq 0}^n$. A straightforward verification shows that this is
a monomial order on $M_n(S)$ (we call the elements of $M_n(S)$
monomials, even though $KM_n(S)$ is not a polynomial ring). Moreover,
for various $S$, these orders are interrelated as follows.

\begin{lm} \label{lm:Order}
For any $\pi \in \Hom_{\OS}(S,T)$ and $\alpha,\beta \in M_n(T)$, we have
$\alpha > \beta \Rightarrow \pi^* \alpha > \pi^* \beta$.
\end{lm}

\begin{proof}
If $j \in T$ is the smallest column index where $\alpha$ and $\beta$ differ,
then $\alpha':=\pi^* \alpha$ and $\beta':=\pi^* \beta$ differ in column
$l:=\min \pi^{-1}(j)$, where they equal $\alpha_{.,j}$ and $\beta_{.j}$,
respectively, and the former is larger than the latter.  Furthermore,
if $l'$ is the smallest position where $\alpha',\beta'$ differ, then
$\alpha_{., \pi(l')} \neq \beta_{., \pi(l')}$ and hence $\pi(l') \geq j$
and hence $l'=\min \pi^{-1}(\pi(l')) \geq \min \pi^{-1}(j) = l$. Hence
in fact $l=l'$ and $\pi^* \alpha > \pi^* \beta$.
\end{proof}

In addition to the well-order $\leq$ on each individual
$M_n(S)$, we also need the following partial order $|$ on
the union of all of them.

\begin{de}
Let $S,T$ be objects in $\OS$. We say that $\alpha \in M_n(T)$ {\em
divides} $\beta \in M_n(S)$ if there exist a $\pi \in \Hom_{OS}(S,T)$
and a $\gamma \in M_n(S)$ such that $\beta=\gamma+\pi^* \alpha$. In this
case, we write $\alpha | \beta$. \hfill $\clubsuit$
\end{de}

The key combinatorial property of the relation just defined is the
following.

\begin{prop} \label{prop:WQO}
The relation $|$ is a well-quasi-order, that is, for any
sequence $\alpha^{(1)} \in M_n(S_1), \alpha^{(2)} \in M_n(S_2),
\ldots$ there exist $i<j$ such that
$\alpha^{(i)}|\alpha^{(j)}$. 
\end{prop}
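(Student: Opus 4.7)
The plan is to reduce divisibility on $\bigsqcup_S M_n(S)$ to Higman's subword embedding on a suitably enriched alphabet. Encoding each $\alpha\in M_n(T)$ merely as its sequence of columns in $(\ZZ_{\ge 0}^n)^{|T|}$ is too weak: Higman's lemma would then produce, for some pair $i<j$, only an increasing injection $\iota:T\hookrightarrow S$ with $\alpha_t\le\beta_{\iota(t)}$, whereas $\alpha\mid\beta$ additionally requires that every $s\in S\setminus\iota(T)$ admit $\beta_s\ge\alpha_{t'}$ for some $t'\le\max\{t:\iota(t)\le s\}$, so that $s$ can be placed inside an existing fibre of the desired $\OS$-morphism. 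I will bake this extra ``covering'' condition into the alphabet by recording, at each position, the upper set spanned by all earlier columns.

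Concretely, let $\mathcal U$ denote the set of upper sets of $\ZZ_{\ge 0}^n$ ordered by \emph{reverse} inclusion, write $\uparrow X:=\{v\in\ZZ_{\ge 0}^n:v\ge x\text{ for some }x\in X\}$, and set $A:=\ZZ_{\ge 0}^n\times\mathcal U$ with the product order. To each $\alpha\in M_n(S)$ associate
\[
\tilde\alpha_i:=\bigl(\alpha_i,\ \uparrow\{\alpha_1,\dots,\alpha_{i-1}\}\bigr)\in A\quad(i=1,\dots,|S|),\qquad U_\alpha:=\uparrow\{\alpha_1,\dots,\alpha_{|S|}\}\in\mathcal U.
\]
Granting that $\mathcal U$ is a wqo (the only non-routine input; see below), Dickson's lemma makes $A$ a wqo, Higman's lemma makes $A^*$ a wqo under subword embedding, and the product $A^*\times\mathcal U$ is again a wqo. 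Applied to an arbitrary infinite sequence $(\alpha^{(k)})$ in $\bigsqcup_S M_n(S)$, this yields indices $i<j$ and an increasing injection $\iota$ from the positions of $\tilde\alpha^{(i)}$ into those of $\tilde\alpha^{(j)}$ with $\tilde\alpha^{(i)}_t\le\tilde\alpha^{(j)}_{\iota(t)}$ for all $t$, and also $U_{\alpha^{(i)}}\supseteq U_{\alpha^{(j)}}$. Abbreviating $\alpha:=\alpha^{(i)}\in M_n(T)$, $\beta:=\alpha^{(j)}\in M_n(S)$, the comparison at $t=1$ forces $\iota(1)=1$ (because $\uparrow\emptyset=\emptyset$). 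Define $\pi:S\to T$ by sending $\iota(t)\mapsto t$, and for any extra $s\in S\setminus\iota(T)$ in the interval $\iota(t)<s<\iota(t+1)$ (with the convention $\iota(|T|+1):=|S|+1$) set $\pi(s)=t'$ for any $t'\le t$ with $\beta_s\ge\alpha_{t'}$: for $t<|T|$ the second coordinate of $\tilde\alpha_{t+1}\le\tilde\beta_{\iota(t+1)}$ places $\beta_s$ in $\uparrow\{\alpha_1,\dots,\alpha_t\}$, while for $t=|T|$ the condition $U_\alpha\supseteq U_\beta$ places $\beta_s$ in $U_\alpha$, so a suitable $t'$ exists. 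A routine check gives $\min\pi^{-1}(t)=\iota(t)$ for every $t\in T$, so $\pi$ is an $\OS$-morphism, and $\pi^*\alpha\le\beta$ componentwise by construction, i.e.\ $\alpha\mid\beta$.

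The main obstacle is the granted input that $\mathcal U$ is wqo under reverse inclusion. Via the bijection $U\leftrightarrow\operatorname{Min}(U)$ this is equivalent to the wqo of finite antichains of $\ZZ_{\ge 0}^n$ under the Smyth order, which is \emph{not} a direct consequence of Higman's lemma---Higman only yields the weaker Hoare order on finite subsets. It does follow from the fact that $\ZZ_{\ge 0}^n$ is a better-quasi-order and that the power-set construction preserves bqo (Nash-Williams), or alternatively from a direct Noetherian-induction argument on the minimal elements. With this ingredient in hand, the encoding above proves Proposition~\ref{prop:WQO}.
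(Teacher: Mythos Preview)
Your proof is correct and takes a genuinely different route from the paper's. Both arguments rest on the same nontrivial input---that upper sets of $\ZZ_{\ge 0}^n$ under reverse inclusion form a wqo, equivalently that monomial ideals in $K[z_1,\dots,z_n]$ are wqo under reverse inclusion---but they deploy it differently. The paper runs a Nash--Williams minimal-bad-sequence argument: it first passes to a bad sequence along which the associated monomial ideals $P(\alpha^{(j)})$ decrease, then peels off the last column, refines using Dickson on that column and the monomial-ideal wqo on the truncated matrices, and derives a contradiction by showing the truncated sequence is still bad but has a strictly smaller index set at the pivot position. Your argument instead packages the ``covering'' condition into the alphabet once and for all, recording at each position both the current column and the upper set generated by the earlier columns, and then invokes Higman directly on $A^*\times\mathcal U$; the resulting embedding already carries exactly the data needed to build the $\OS$-morphism $\pi$.

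What each buys: the paper's argument is self-contained modulo MacLagan's theorem (your ``granted input'' is precisely \cite{MacLagan01}, which the paper cites), and it avoids the somewhat heavy encoding. Your reduction is more structural and makes transparent why the reverse-inclusion wqo on upper sets is exactly the missing ingredient beyond naive Higman on columns; it also generalises cleanly, since any wqo alphabet carrying a ``prefix upper-set'' component would admit the same construction. One small remark: you do not need to appeal to bqo theory for the input---MacLagan's short Noetherian-induction proof suffices, and since the paper already relies on it, you may simply cite it.
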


\begin{proof}
First, to each $\alpha \in M_n(S)$ we associate the monomial ideal
$P(\alpha)$ in the polynomial ring $R:=K[z_1,\ldots,z_n]$ (here $K$
is but a place holder) generated by the monomials $z^{\alpha_{.,j}},
j \in S$. The crucial fact that we will use about monomial ideals in $R$
is that in any sequence $P_1,P_2,\ldots$ of such ideals there exist $i<j$
such that $P_i \supseteq P_j$---in other words, monomial ideals are
well-quasi-ordered with respect to reverse inclusion \cite{MacLagan01}.

To prove the proposition, suppose, on the contrary, that there exists a sequence as above with
$\alpha^{(i)}\! \not\!|\  \alpha^{(j)}$ for all $i<j$. Such a sequence is
called {\em bad}. Then by basic properties of well-quasi-orders, some bad
sequence exists in which moreover
\begin{equation} \label{eq:J}
P(\alpha^{(1)}) \supseteq P(\alpha^{(2)}) \supseteq \ldots 
\end{equation} 
Among all bad sequences {\em with this additional property} choose one
in which, for each $j=1,2,\ldots,$ the cardinality $|S_j|$ is minimal among all bad sequences starting with
$\alpha^{(1)},\ldots,\alpha^{(j-1)}$.

Write $\alpha^{(j)}=(\gamma^{(j)}|\beta^{(j)})$, where $\beta^{(j)} \in
\ZZ_{\geq 0}^n$ is the last column (the one labelled by the largest
element of $S_j$), and $\gamma^{(j)}$ is the remainder.  By Dickson's lemma, there exists a subsequence
$j_1<j_2<\ldots$ such that $\beta^{(j_1)},\beta^{(j_2)},\ldots$ increase
weakly in the coordinate-wise ordering on $\ZZ_{\geq 0}^n$. By restricting
to a further subsequence, we may moreover assume that also
\begin{equation} \label{eq:Jg}
P(\gamma^{(j_1)}) \supseteq P(\gamma^{(j_2)}) \supseteq
\ldots 
\end{equation}
Then consider the new sequence
\[
\alpha^{(1)},\ldots,\alpha^{(j_1-1)},\gamma^{(j_1)},\gamma^{(j_2)},\ldots
\]
By~\eqref{eq:Jg} and~\eqref{eq:J}, and since $P(\alpha^{(j)}) \supseteq
P(\gamma^{(j)})$, this sequence also satisfies \eqref{eq:J}. We claim
that, furthermore, it is bad. 

Suppose, for instance, that $\gamma^{(j_1)} | \gamma^{(j_2)}$. Set
$a_i:=\max S_{j_i}$ for $i=1,2$. Then there exists a $\pi \in
\Hom_{\OS}(S_{j_2} \setminus \{a_2\},S_{j_1} \setminus \{a_1\})$
such that $\gamma^{(j_2)}-\pi^* \gamma^{(j_1)} \in M_n(S_{j_2}
\setminus \{a_2\})$. But then extend $\pi$ to an element $\pi$
of $\Hom_\OS(S_{j_2},S_{j_1})$ by setting $\pi(a_2):=a_1$; since
$\beta^{(j_1)}$ is coordinate-wise smaller than $\beta^{(j_2)}$
we find that $\alpha^{(j_2)}-\pi^* \alpha^{(j_1)} \in M_n(S_{j_2})$, so
$\alpha^{(j_1)}|\alpha^{(j_2)}$, in contradiction to the badness of
the original sequence.

On the other hand, suppose for instance that $\alpha^{(1)}
| \gamma^{(j_2)}$ and write $a_2:=\max S_{j_2}$.  Then there
exists a $\pi \in \Hom_{\OS}(S_{j_2} \setminus \{a_2\},S_1)$ such
that $\gamma^{(j_2)}-\pi^*(\alpha^{(1)}) \in M_n(S_{j_2} \setminus
\{a_2\})$. Now, {\em and this is why we required that~\eqref{eq:J} holds},
since $P(\alpha^{(j_2)}) \subseteq P(\alpha^{(1)})$, there exists an
element $s \in S_1$ such that the column $\beta^{(j_2)}$ is coordinatewise
at least as large as the $s$-th column of $\alpha^{(1)}$. Extend
$\pi$ to an element of $\Hom_\OS(S_{j_2},S_1)$ by setting $\pi(a_2) = s$. Since $a_2$ is the maximal element of $S_{j_2}$, this
does not destroy the property that the function $\min \pi^{-1}(.)$
be increasing in its argument. Moreover, this $\pi$ has the property
that $\alpha^{(j_2)}-\pi^*\alpha^{(1)} \in M_n(S_{j_2})$, again a
contradiction.

Since we have found a bad sequence satisfying \eqref{eq:J} but with
strictly smaller underlying set at the $j_1$-st position, we have arrived
at a contradiction. 
\end{proof}

Next we use a Gr\"obner basis argument.

\begin{proof}[Proof of Theorem~\ref{thm:RankOne}]
We prove the stronger statement that $KM_n$
is Noetherian as an $\OS$-algebra.  Let $P$ be any ideal in $A_n/I_n
= KM_n$. For each object $S$ in $\OS$, let $L(S) \subseteq M_n(S)$
denote the set of leading terms of nonzero elements of $P(S)$ relative
to the ordering $>$.  Proposition~\ref{prop:WQO} implies that there
exists a finite collection $S_1,\ldots,S_N$ and $\alpha^{(j)} \in
L(S_j)$ such that each element of each $L(S)$ is divisible by some
$\alpha^{(j)}$. Correspondingly, there exist elements $f_j \in P(S_j)$
with leading monomial $\alpha^{(j)}$ and leading coefficient $1$. To see
that the $f_j$ generate $P$, suppose that there exists an $S$ such that
$P(S)$ is not contained in the ideal generated by the $f_j$, and let $g
\in P(S)$ have minimal leading term $\beta$ among all elements of $P(S)$
not in the ideal generated by the $f_j$; without loss of generality $g$
has leading coefficient $1$.  By construction, there exists some $j$ and
some $\pi \in \Hom_{\OS}(S,S_j)$ such that $\beta-\pi^* \alpha^{(j)}
\in M_n(S)$. But now, by Lemma~\ref{lm:Order}, we find that the leading
monomial of $\pi^* f_j$ equals $\pi^* \alpha^{(j)}$, hence
subtracting a monomial times $\pi^* f_j$ from $g$ we obtain
an element of $P(S)$ with smaller leading monomial that is
not in the ideal generated by the $f_j$---a contradiction.
\end{proof}

Below, we need the following generalisation of
Theorem~\ref{thm:RankOne}.

\begin{thm} \label{thm:Tuples}
For any $n_1,\ldots,n_r \in (\ZZ)_{\geq 0}$ the $\Fin$-algebra (or
$\OS$-algebra) $(A_{n_1}/I_{n_1}) \otimes \cdots \otimes (A_n/I_{n_1})$
is Noetherian.
\end{thm}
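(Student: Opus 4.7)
The plan is to reduce Theorem~\ref{thm:Tuples} to a tuple version of the argument used for Theorem~\ref{thm:RankOne}. Let $M$ be the $\Fin^{\op}$-monoid with $M(S) := M_{n_1}(S) \times \cdots \times M_{n_r}(S)$ and component-wise pullback. Since the monoid algebra of a product of monoids is the tensor product of the individual monoid algebras, the single-factor identification $A_{n_k}/I_{n_k} \cong KM_{n_k}$ yields
\[ (A_{n_1}/I_{n_1}) \otimes \cdots \otimes (A_{n_r}/I_{n_r}) \;\cong\; KM \]
as $\OS^{\op}$-algebras, so it suffices to prove that $KM$ is $\OS$-Noetherian.

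I would generalise divisibility to tuples: $(\alpha^{(1)}, \ldots, \alpha^{(r)}) \in M(T)$ divides $(\beta^{(1)}, \ldots, \beta^{(r)}) \in M(S)$ when there exist \emph{a single} $\pi \in \Hom_{\OS}(S,T)$ and elements $\gamma^{(k)} \in M_{n_k}(S)$ with $\beta^{(k)} = \gamma^{(k)} + \pi^* \alpha^{(k)}$ for every $k$. The key step is the joint analogue of Proposition~\ref{prop:WQO}, asserting that this relation is a well-quasi-order. My approach is to imitate the minimal bad sequence argument, but attach to each tuple the \emph{single} monomial ideal
\[ P(\alpha^{(1)}, \ldots, \alpha^{(r)}) := \left(\, \prod_{k=1}^r (z^{(k)})^{\alpha^{(k)}_{.,j}} \;\middle|\; j \in S \,\right) \]
in the enlarged polynomial ring $R := K[z^{(k)}_i \mid k \in [r],\ i \in [n_k]]$. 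The crucial feature is that this one ideal bundles the column data across all $r$ components, so that an inclusion of such ideals delivers simultaneous column-domination relations in every component at once---precisely what is needed to construct one $\pi$ that serves every factor. Well-quasi-orderedness of monomial ideals in $R$, together with Dickson's lemma applied to the joint last column in $\ZZ_{\geq 0}^{n_1+\cdots+n_r}$, then allows the minimal bad sequence argument from Proposition~\ref{prop:WQO} to go through essentially verbatim.

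Once the joint well-quasi-order is established, the rest is routine. Equip each $M_{n_k}(S)$ with its individual monomial order from the single-factor case, and combine them lexicographically across $k$ to obtain a monomial order on $M(S)$. The analogue of Lemma~\ref{lm:Order} follows immediately from its single-factor version together with the fact that $\pi^*$ is injective for surjective $\pi$, so that the smallest index $k$ at which two tuples differ is preserved under pullback. The Gr\"obner basis argument of Theorem~\ref{thm:RankOne} then carries over word for word, producing a finite set of generators for any ideal in $KM$.

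The main obstacle is the joint well-quasi-order statement. The single-factor proof already requires a delicate combination of monomial-ideal WQO, Dickson's lemma, and a minimal bad sequence argument; the generalisation succeeds only because there is a natural joint invariant---the product-monomial ideal in the enlarged polynomial ring---that makes a single common $\pi$ automatic rather than requiring us to synchronise separate $\pi$'s for each component.
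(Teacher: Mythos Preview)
Your proposal is correct and is essentially the paper's own argument unfolded: your enlarged ring $K[z^{(k)}_i]$ and product monomial ideal are exactly what one obtains by stacking the $r$ matrices into a single block matrix in $M_{n_1+\cdots+n_r}$, which is how the paper proceeds. The paper simply packages this more tersely, observing that under the block embedding $\iota$ the single-factor divisibility relation on $M_{n_1+\cdots+n_r}$ restricts to the joint one (because differences of block matrices with constant partial column sums again have that property), so Proposition~\ref{prop:WQO} and the Gr\"obner argument apply without being rerun.
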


\begin{proof}
This algebra is isomorphic to $B:=K (M_{n_1} \times \cdots \times M_{n_r})$. There
is a natural embedding $\iota: M_{n_1} \times \cdots \times M_{n_r} \to
M_{n_1+\ldots+n_s}=:M_n$ by forming a block matrix; its image consists
of block matrices with constant partial column sums.  And while
a subalgebra of a Noetherian algebra is not necessarily
Noetherian, this is true in the current setting. 

The crucial point is
that if $\alpha_i \in (M_{n_1} \times \cdots \times M_{n_r}) (S_i)$
for $i=1,2$, then {\em a priori} $\iota(\alpha_1) | \iota(\alpha_2)$
only means that $\iota(\alpha_2)-\pi^* \iota(\alpha_1) \in M_n(S_2)$;
but since both summands have constant partial column sums, so does their
difference, so in fact, the difference lies in the image of $\iota$. With
this observation, the proof above for the case where $r=1$ goes through 
unaltered for arbitrary $r$.
\end{proof}

\begin{re}
Similar arguments for passing to sub-algebras are also used in
\cite{Hillar09} and \cite{Draisma08b}. \hfill $\clubsuit$
\end{re}

\section{Proofs of the main results} \label{sec:Proof}

In this section we prove Theorems~\ref{cor:Markov} and~\ref{thm:Main}.

\subsection*{Toric fibre products}

To prove Theorem~\ref{thm:Main}, we work with a product of $s$ copies of the category $\Fin$; one for each of the varieties $X_i$ whose iterated
fibre products are under consideration.  Let $s,r \in \ZZ_{\geq 0}.$
For each $i \in [s]$ and $j \in [r]$ let $d_{ij} \in
\ZZ_{\geq 0}$ and set $V_{ij}:=K^{d_{ij}}$. Consider the $\Fin^s$-variety
$\cV$ that assigns to an $s$-tuple $S=(S_1,\ldots,S_s)$ the product
\[ \prod_{j=1}^r \bigotimes_{i=1}^s V_{ij}^{\otimes S_i} \]
and to a morphism $\pi=(\pi_1,\ldots,\pi_s):S
\to
T:=(T_1,\ldots,T_s)$ in $\Fin^s$ the linear map $\cV(S) \to \cV(T)$ determined by 
\begin{equation}  \label{eq:cV}
\left(\otimes_i \otimes_{k \in S_i} v_{ijk}\right)_{j \in [r]}
\mapsto 
\left(\otimes_i \otimes_{l \in T_i} \left(\bigcirc_{k \in \pi_i^{-1}(l)}
v_{ijk}\right)\right)_{j \in [r]}, 
\end{equation}
where the Hadamard product $\bigcirc$ is the one on $V_{ij}$.  Let
$\cQ^{\leq 1}(S)$ be the Zariski-closed subset of $\cV(S)$ consisting
of $r$-tuples of tensors of rank at most one; thus $\cQ^{\leq 1}$ is a
$\Fin^s$-subvariety of $\cV$.

For each $i \in [s]$ let $X_i \subseteq \prod_{j=1}^r V_{ij}$
be a Hadamard-stable Zariski-closed subset. Then for any tuple
$S=(S_1,\ldots,S_s)$ in $\Fin^s$ the variety $\cX(S):=X_1^{* S_1} *
\cdots * X_s^{* S_s}$ is a Zariski-closed subset of $\cV(S)$.

\ \\

\begin{lm} \label{lm:FinsSub}
The association $S \mapsto \cX(S)$ defines a $\Fin^s$-closed subvariety
of $\cQ^{\leq 1}$.
\end{lm}

\begin{proof}
From the definition of $*$ in \eqref{eq:Toric} it is clear that the
elements of $\cX(S)$ are $r$-tuples of tensors of rank at most $1$.
Furthermore, for a morphism $S \to T$ in $\Fin^s$ the linear map 
$\cV(S) \to \cV(T)$ from~\eqref{eq:cV} sends $\cX(S)$ into $\cX(T)$---here 
we use that if $(v_{i,j,k})_{j \in [r]} \in X_i$ for each $k \in \pi^{-1}(l)$, 
then also $(\bigcirc_{k \in \pi^{-1}(l)} v_{ijk})_{j \in [r]} \in X_i$ 
since $X_i$ is Hadamard-stable. 
\end{proof}

Now Theorem~\ref{thm:Main} follows once we know that the coordinate ring
of $\cQ^{\leq 1}$ is a Noetherian $(\Fin^s)^\op$-algebra.  For $s=1$ and
$r=1$ this is Theorem~\ref{thm:RankOne} with $n$ equal to $d_{11}$. For
$s=1$ and general $r$, this is Theorem~\ref{thm:Tuples} with $n_j$
equal to $d_{1j}$.

For $r=1$ and general $s$, Theorem~\ref{thm:Main} follows from a
$\Fin^s$-analogue of Theorem~\ref{thm:RankOne}, which is proved as
follows. The coordinate ring of $\cQ^{\leq 1}(S_1,\ldots,S_s)$ is the
subring of $K (M_{d_{11}}(S_1) \times \cdots \times M_{d_{s1}}(S_s))$
spanned by the monomials corresponding to $s$-tuples of matrices with
the {\em same} constant column sum. Using Proposition~\ref{prop:WQO}
and the fact that a finite product of well-quasi-ordered sets is
well-quasi-ordered one finds that the natural $\Fin^s$-analogue on
$M_{d_{11}} \times \cdots \times M_{d_{s1}}$ of the division relation $|$
is a well-quasi-order; and this implies, once again, that the coordinate
ring of $\cQ^{\leq 1}$ is a Noetherian $(\OS^s)^\op$-algebra.

Finally, for general $s$ and general $r$, the result follows as in the
proof of Theorem~\ref{thm:Tuples}. This proves the Theorem~\ref{thm:Main}
in full generality. \hfill $\square$

\begin{re} \label{re:NoOne}
The only place where we used that the $X_i$ contain the all-one
vector is in the proof of Lemma~\ref{lm:FinsSub} when $\pi^{-1}(l)$
happens to be empty. If we do not require this, then the conclusion of
Theorem~\ref{thm:Main} still holds, since one can work directly with
the category $\OS^s$ in which morphisms $\pi$ are surjective.
\end{re}

\begin{re} \label{re:Scheme}
If we replace the $X_i$ by Hadamard-stable closed subschemes rather than
subvarieties, then $S \mapsto \cX(S)$ is still a $\Fin^s$-closed
subscheme of $\cQ^{\leq 1}$, and since the coordinate ring of the latter
is Noetherian, the proof goes through unaltered.
\end{re}

\begin{re} \label{re:Independent}
In the Independent Set Theorem from~\cite{Hillar09}, the graph $G=(N,E)$
is fixed but the state space sizes $d_j$ grow unboundedly for $j$ in
an independent set $T \subseteq N$ and are fixed for $j \not \in T$. In
this case, given a $T$-tuple of maps $(\pi_j:S_j \to P_j)_{j \in T}$ of
finite sets, where $S_j$ is thought of as the state space of $j \in T$
in the smaller model and $P_j$ as the state space in the larger model, we
obtain a natural map from the larger model into the smaller model. Hence
then the graphical model is naturally a $(\Fin^\op)^T$-variety and its
coordinate ring is a $\Fin^T$-algebra. Note the reversal of the roles
of these two categories compared to Lemma~\ref{lm:FinsSub}.
\end{re}

\subsection*{Markov random fields} \label{sec:Markov}

Given a finite (undirected, simple) graph $G=(N,E)$ with a
number $d_j$ of states attached to each node $j \in N$, the
graphical model is $X_G:=\overline{\im \phi_G}$, where
$\phi_G$ is the parameterisation
\[
\phi_G: \CC^{\prod_{C\in\maxcl(G)}\prod_{j\in
C}[d_j]}\to\CC^{\prod_{j\in N}[d_j]}, \qquad
(\theta^C_\alpha)_{C,\alpha}\mapsto
\left(\prod_{C\in\maxcl(G)}\theta^C_{\beta|_C}\right)_{\beta}.
\]

\begin{lm} \label{lm:GraphHadamard}
For any finite graph $G$, the graphical model $X_G$ is
Hadamard-closed.
\end{lm}

\begin{proof}
The parameterisation $\phi$ sends the all-one vector in the domain
to the all-one vector $\one$ in the target space, so $\one \in \im \phi$. Moreover, if $\theta,\theta'$ are two parameter vectors, then
$\phi(\theta \bigcirc \theta')=\phi(\theta) \bigcirc \phi(\theta')$,
so $\im \phi$ is Hadamard-closed. Then so is its closure. 
\end{proof}

Following~\cite{Sullivant07}, we relate graph glueing to toric fibre
products. We are given finite graphs $G_1,\ldots,G_s$ with node sets
$N_1,\ldots,N_s$ such that $N_i \cap N_k=N_0$ for all $i \neq k$ in $[s]$
and such that each $G_i$ induces the same graph $H$ on $N_0$. Moreover,
for each $j \in \bigcup_i N_i$ we fix a number $d_j$ of states.

For each $\beta_0 \in \prod_{j \in N_0} [d_j]$ and each $i \in [s]$
set $V_{i,\beta_0}:=\CC^{\prod_{j \in N_i \setminus N_0}[d_j]}$, which
we interpret as the ambient space of the part of the probability table
of the variables $X_j, j \in N_i$ where we have fixed the states of
the variables in $N_0$ to $\beta_0$---up to scaling, these are the
conditional joint probabilities for the $X_j, j \in N_i \setminus N_0$
given that the $X_j, j \in N_0$ are in joint state $\beta_0$.
For $\beta \in \prod_{j \in N_i} [d_j]$ write $\beta=\beta_0||\beta'$
where $\beta_0,\beta'$ are the restrictions of $\beta$ to $N_0$ and $N_i
\setminus N_0$, respectively.  For each maximal clique $C$ in $G_i$ define
$C_0=C \cap N_0$ and $C'=C \setminus N_0$. Correspondingly, decompose
$\alpha \in \prod_{j \in C} [d_j]$ as $\alpha=\alpha_0||\alpha'$, where
$\alpha_0,\alpha'$ are the restrictions of $\alpha$ to $C_0$ and $C'$,
respectively.

Then the graphical model $X_{G_i}:=\overline{\im
\phi_{G_i}}$ is the closure of the image of the parameterisation
\[ \phi_i:\CC^{\prod_{C \in \maxcl(G_i)} \prod_{j \in C} [d_j]} \to 
\prod_{\beta_0 \in \prod_{j \in N_0} [d_j]} V_{i,\beta_0}, 
\qquad 
(\theta_{\alpha}^C)_{C,\alpha} \mapsto 
\left( \left(\prod_{C \in \maxcl(G)}
\theta^C_{(\beta_0|_{C_0})\ ||\ (\beta'|_{C'})}
\right)_{\beta'} \right)_{\beta_0}.
\]
Setting $r:=\prod_{j \in N_0} d_j$, we are exactly in the setting of the
previous sections: for each $i,k \in [s]$, we have the bilinear map 
\[ 
*:\prod_{\beta_0} V_{i,\beta_0} \times \prod_{\beta_0} V_{k,\beta_0}
\to \prod_{\beta_0} (V_{i,\beta_0} \otimes
V_{k,\beta_0}),\qquad
((v_{i,\beta_0})_{\beta_0},(v_{k,\beta_0})_{\beta_0})
\mapsto 
(v_{i,\beta_0} \otimes v_{k,\beta_0})_{\beta_0}, 
\] 
and we can take iterated products of this type. The space on the
right is naturally isomorphic to $\CC^{\prod_{j \in N_1 \cup
N_2} [d_j]}$, the space of probability tables for the joint
distribution of the variables labelled by the vertices in
the glued graph $G_i +_H G_k$. Under this identification we
have the following.

\begin{prop} \label{prop:GlueToric}
For $G:=\sum_H^{a_1} G_1 +_H \cdots +_H \sum_H^{a_s} G_s$ we
have $X_G =X_{G_1}^{* a_1} * \cdots *
X_{G_s}^{* a_s}$.
\end{prop}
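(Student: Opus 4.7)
The plan is to show that the parameterisation $\phi_G$ factors through iterated toric fibre multiplication of the parameterisations $\phi_i$, so that the images agree on the nose, and then pass to Zariski closures by continuity of $*$.

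First I would identify the maximal cliques of $G$. Since the only edges between vertices of different copies are those inside $H$, every clique of $G$ lies inside a single copy of some $G_i$, and by a short combinatorial check one sees that the maximal cliques of $G$ are precisely the copies of the maximal cliques of the individual $G_i$'s. Consequently the parameter vector $\theta$ for $\phi_G$ decomposes as an independent tuple $(\theta^{(i,m)})$ indexed by $i\in[s]$ and a copy label $m\in[a_i]$, with $\theta^{(i,m)}$ ranging over the parameter space of $G_i$. (If one worries about a maximal clique of some $G_i$ that lies in $N_0$ but fails to be maximal in $G$ because it extends in another copy, one absorbs its parameter into the corresponding clique parameter of the larger copy; this does not change $\overline{\im \phi_G}$.)

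Next, write any configuration of $G$ as $\beta = \beta_0 \,||\, (\beta'_{i,m})_{i,m}$ with $\beta_0\in\prod_{j\in N_0}[d_j]$ and $\beta'_{i,m}\in\prod_{j\in N_i\setminus N_0}[d_j]$. Restricting the product in $\phi_G$ to the maximal cliques sitting inside the copy $(i,m)$, and using the decomposition $C = C_0 \sqcup C'$ from the excerpt, we get
\[
\phi_G(\theta)_\beta \;=\; \prod_{i,m}\; \prod_{C\in\maxcl(G_i)} \theta^{(i,m),C}_{(\beta_0|_{C_0})\,||\,(\beta'_{i,m}|_{C'})}
\;=\; \prod_{i,m}\; \phi_i(\theta^{(i,m)})_{\beta_0,\,\beta'_{i,m}}.
\]
Under the identification of $\CC^{\prod_{j\in N}[d_j]}$ with $\prod_{\beta_0}\bigotimes_{i,m} V_{i,\beta_0}$ used in the excerpt, the right-hand side is precisely the $(\beta_0,(\beta'_{i,m}))$-coordinate of the element $\phi_1(\theta^{(1,1)})*\cdots*\phi_s(\theta^{(s,a_s)})$. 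Since the parameters in different copies are chosen independently, this gives
\[
\im \phi_G \;=\; \bigl\{u_{1,1}*\cdots*u_{s,a_s} \,\bigm|\, u_{i,m}\in \im \phi_i\bigr\}.
\]

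Finally, pass to closures. On one side, the right-hand display is contained in the set on which the multilinear map $*$ is evaluated with arguments in $X_{G_i}=\overline{\im\phi_i}$, so $\im\phi_G \subseteq X_{G_1}^{*a_1}*\cdots*X_{G_s}^{*a_s}$ and taking closures yields $X_G \subseteq X_{G_1}^{*a_1}*\cdots*X_{G_s}^{*a_s}$. Conversely, given $u_{i,m}\in X_{G_i}$, approximate each $u_{i,m}$ by a net in $\im\phi_i$; since $*$ is multilinear and hence continuous, the corresponding products lie in $\im\phi_G$ and converge to $u_{1,1}*\cdots*u_{s,a_s}$, which therefore lies in $\overline{\im\phi_G}=X_G$. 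Taking the closure of this containment gives the reverse inclusion.

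The main obstacle is the bookkeeping for the maximal cliques of $G$ versus those of the copies, especially if $H$ itself contributes cliques that behave differently in different copies; but since we only care about the Zariski closure of the image, the ambiguity in choosing where to place a $C\subseteq N_0$ interaction parameter is harmless.
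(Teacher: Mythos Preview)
Your approach is the paper's: show that $\im\phi_G$ coincides with the set of iterated $*$-products of elements from the $\im\phi_i$ by analysing maximal cliques, then pass to closures. The paper reduces to gluing two graphs and is explicit about the one bookkeeping point you gloss over: a maximal clique $C\subseteq N_0$ that is maximal in \emph{every} $G_i$ (and hence in $G$) contributes a \emph{single} parameter $\mu^C$ to $\phi_G$, so your sentence ``the parameter vector for $\phi_G$ decomposes as an independent tuple $(\theta^{(i,m)})$'' is not literally true---your parenthetical handles cliques that cease to be maximal, but not this duplication. The paper resolves it by writing $\mu^C_\alpha=\theta^C_\alpha\,\eta^C_\alpha$ for $C\subseteq N_0$ in one direction and setting $\eta^C_\alpha=1$ in the other; the same absorption trick fixes your version. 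One further cosmetic point: the net/convergence language is awkward in the Zariski topology; the clean argument is simply that $*$ is a morphism of varieties, hence Zariski-continuous, so $*\bigl(\overline{A_1}\times\cdots\times\overline{A_k}\bigr)\subseteq\overline{*(A_1\times\cdots\times A_k)}$.
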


\begin{proof}
It suffices to prove this for the gluing of two graphs. Note that
a clique in $G:=G_1 +_H G_2$ is contained entirely in either $G_1$
or $G_2$, or in both but then already in $H$.  Let $\theta,\eta$ be a
parameter vectors in the domains of $\phi_{G_1},
\phi_{G_2}$, respectively. Then 
\begin{align*}
&\phi_{G_1}(\theta) *
\phi_{G_2}(\eta) \\
 &= 
\left( \left(\prod_{C \in \maxcl(G_1)}
\theta^{C}_{(\beta_0|_{C_0})\ ||\ (\beta'|_{C'})}
\right)_{\beta'} \right)_{\beta_0}
*
\left( \left(\prod_{C \in \maxcl(G_2)}
\eta^{C}_{(\beta_0|_{C_0})\ ||\ (\beta'|_{C'})}
\right)_{\beta'} \right)_{\beta_0}\\
 &= 
\left( \left(\prod_{C \in \maxcl(G_1)}
\theta^{C}_{(\beta_0|_{C_0})\ ||\ (\beta'|_{C'})}
\right)_{\beta' \in \prod_{j \in N_1 \setminus N_0}[d_j]} 
\otimes 
\left(\prod_{C \in \maxcl(G_2)}
\eta^{C}_{(\beta_0|_{C_0})\ ||\ (\beta'|_{C'})}
\right)_{\beta' \in \prod_{j \in N_2 \setminus N_0}[d_j]}
\right)_{\beta_0}\\
 &= 
\left( \left(\prod_{C \in \maxcl(G_1)}
\theta^{C}_{(\beta_0|_{C_0})\ ||\ (\beta'|_{C'})}
\cdot
\prod_{C \in \maxcl(G_2)}
\eta^{C}_{(\beta_0|_{C_0})\ ||\ (\beta'|_{C'})}
\right)_{\beta' \in \prod_{j \in (N_1 \cup N_2) \setminus
N_0} [d_j]} \right)_{\beta_0}\\
&= 
\left( \left(\prod_{C \in \maxcl(G)}
\mu^{C}_{(\beta_0|_{C_0})\ ||\ (\beta'|_{C'})}
\right)_{\beta' \in \prod_{j \in (N_1 \cup N_2) \setminus
N_0} [d_j]} \right)_{\beta_0}=\phi_G(\mu),
\end{align*}
where, for $C \in \maxcl(G)$ and $\alpha \in \prod_{j \in
C}[d_j]$, the parameter $\mu^C_\alpha$ is defined as
\[ \mu^C_\alpha:=
\begin{cases}
\theta^C_\alpha & \text{ if $C \subseteq N_1$ and $C\not\subseteq N_0$,}\\
\eta^C_\alpha & \text{ if $C \subseteq N_2$ and $C\not\subseteq N_0$, and}\\
\theta^C_\alpha \ \eta^C_\alpha & \text{ if $C \subseteq N_0$.}
\end{cases}
\]
This computation proves that $X_{G_1} * X_{G_2} \subseteq
X_{G}$. Conversely, given any parameter vector $\mu$ for $G$, we can let
$\theta$ be the restriction of $\mu$ to maximal cliques of the first
and third type above, and set $\eta^C_\alpha$ equal to $\mu^C_\alpha$
if $C$ is of the second type above and equal to $1$ if it is of the
third type. This yields the opposite inclusion.
\end{proof}

\begin{proof}[Proof of Theorem~\ref{cor:Markov}.]
By Proposition~\ref{prop:GlueToric}, the ideal $I_G$ is the ideal
of the iterated toric fibre product $X_{G_1}^{* a_1} * \cdots *
X_{G_s}^{*a_s}$. By Lemma~\ref{lm:GraphHadamard}, each of the varieties
$X_{G_i}$ is Hadamard closed. Hence Theorem~\ref{thm:Main} applies,
and $I_G$ is generated by polynomials of degree less than some $D$,
which is independent of $a_1,\ldots,a_s$. Then it is also
generated by the binomials of at most degree $D$.
\end{proof}


\end{document}